\newtheorem{assumption}{Assumption}
\def \R {\mathbb R}
\newcommand{\EndProof}{\begin{flushright}$\square$\end{flushright}}
\def\R{\mathbb{R}}
\newcommand{\E}{{\mathbb E}}
\def\R{\mathbb R}
\def\E{\mathbb E}
\def\la{\langle}
\def\ra{\rangle}
\begin{document}

\title{Improved Exploiting Higher Order Smoothness in Derivative-free Optimization and Continuous Bandit\thanks{This work is based on results achieved by 63 Conference MIPT held in November 2020.  The research of A. Gasnikov was partially supported by the Ministry of Science and Higher Education of the Russian Federation (Goszadaniye) 075-00337-20-03, project no. 0714-2020-0005. The work of V. Novitskii was supported by Andrei M. Raigorodskii Scholarship in Optimization.}}

\titlerunning{Improved  Exploiting Higher Order Smoothness in Derivative-free Optimization}

\author{Vasilii Novitskii \and
        Alexander Gasnikov
}
\institute{V. Novitskii \at
            Moscow Institute of Physics and Technology, Russia\\
            \email{vasiliy.novitskiy@phystech.edu} 
            \and
            A. Gasnikov \at
            Moscow Institute of Physics and Technology, Russia \\
            Institute for Information Transmission Problems RAS, Russia \\
            Weierstrass Institute for Applied Analysis and Stochastics, Germany
}
\authorrunning{V. Novitskii  and A. Gasnikov}
% First names are abbreviated in the running head.
% If there are more than two authors, 'et al.' is used.
%

\date{Received: date / Accepted: date}
\maketitle              % typeset the header of the contribution
\begin{abstract}
We consider $\beta$-smooth (satisfies the generalized Hölder condition with parameter $\beta > 2$) 
stochastic convex optimization problem with zero-order one-point oracle. The best known result was \cite{akhavan2020exploiting}:  $$\mathbb{E} \left[f(\overline{x}_N) - f(x^*)\right] = \tilde{\mathcal{O}} \left(\dfrac{n^{2}}{\gamma N^{\frac{\beta-1}{\beta}}} \right)$$ in $\gamma$-strongly convex case, where $n$ is the dimension. In this paper we improve this bound:
$$\mathbb{E} \left[f(\overline{x}_N) - f(x^*)\right] = \tilde{\mathcal{O}} \left(\dfrac{n^{2-\textcolor{red}{\frac{1}{\beta}}}}{\gamma N^{\frac{\beta-1}{\beta}}} \right).$$

% We solve the problem of stochastic optimization of convex  function with inexact gradient-free oracle. The function is higher order smooth (the derivatives of order 1 or greater are Lipschitz). We provide the randomized algorithm which beats the existing state-of-the-art algorithms in the case the function is higher order smooth in convex and strongly convex case.

\keywords{zeroth-order optimization \and convex problem \and stochastic optimization \and one-point bandit \and smoothing kernel}

\end{abstract}

\clearpage

\section{Introduction}\label{section:intro}

We study the problem of zero-order stochastic optimization in which the aim is to minimize an unknown convex or strongly convex function where no gradient realization is given but a function value is available at each iteration with some additive noise $\xi$.
We also study a closely related problem of continuous stochastic bandits. These problems have received significant attention in the literature (see \cite{akhavan2020exploiting, bach2016highly, gasnikov2014stochastic, gasnikov2015gradient, gasnikov2017stochastic, zhang2020boosting, bubeck2017kernel, larson2019derivative-free, conn2009introduction, spall2003introduction}) and are fundamental for many application where the derivative of function is not available or it is hard to calculate derivatives.

The goal of this paper is to exploit higher order smoothness of the function to improve the performance of projected gradient-like algorithms. 
Our approach is outlined in Algorithm \ref{algo1}, in which a sequential algorithm gets at each iteration two function values under some noise. At each iteration the algorithm gets function values at points $x_k + \delta_k$ and $x_k - \delta_k$, where $\delta_k = \tau_k r_k e_k$. Here $r_k$ is uniformly distributed random variable, $e_k$ is uniformly distributed on the Euclidean sphere, $\tau_k$ -- is tunable parameter of the algorithm, the smaller $\tau_k$ is, the smaller approximation error of the gradient $\|\widetilde{g_k} - \nabla f(x_k)\|$ is (in this article we use only the Euclidean norm) but the bigger variance of $\|\widetilde{g_k}\|$ is, so the trade-off between these terms is needed. Our approach uses kernel smoothing technique proposed by Polyak and Tsybakov in \cite{polyak1990optimal}, this helps to exploit higher order smoothness.

\begin{algorithm}[H]
\caption{Zero-order Stochastic Projected Gradient} \label{algo1}
\begin{algorithmic}
\State 
\noindent {\bf Requires: } Kernel $K: [-1, 1] \rightarrow \mathbb{R}$, step size $\alpha_k>0$, parameters $\tau_k$.

\State
{\bf Initialization: } Generate scalars $r_1, \dots, r_N$ uniformly on $[-1,1]$ and vectors $e_1, \dots, e_N$ uniformly on the Euclidean unit sphere $S_n=\{e\in \mathbb{R}^n: \, \|e\|=1 \}$.
\For{$k=1, \dots, N$}{
 \begin{enumerate}
     \item $y_k := f(x_k+\tau_k r_k e_k) + \xi_k$, $y_k' := f(x_k-\tau_k r_k e_k) + \xi_k'$
     \item Define $\widetilde{g_k}:= \frac{n}{2\tau_k}(y_k-y_k')e_kK(r_k)$
     \item Update $x_{k+1} := \Pi_{Q}(x_k-\alpha_k\widetilde{g_k})$
 \end{enumerate}
}
\EndFor
\State 
\noindent {\bf Output:} $\left\{x_k\right\}_{k=1}^N$.
\end{algorithmic}
\end{algorithm}

In algorithms like Algorithm \ref{algo1} the two possibilities are usually considered. The first one is to obtain a function value in one point with some noise ("one-point" multi-armed bandit), the second is to observe function values in two points with the same noise at each iteration ("two-point" multi-armed bandit). The use of three and more points do not make dramatic difference to the results for two points \cite{duchi2015optimal}.
Note that despite our algorithm gets two function values for iteration, they are obtained with different noise $\xi_k$ and $\xi'_k$, so it is correct to regard Algorithm \ref{algo1} one-point and to compare it with one-point algorithms.  

In this paper we study higher order smooth functions $f$ functions satisfying the generalized Hölder condition with parameter $\beta > 2$ (see inequality \eqref{Hölder-condition} below). 

We address the question: what is the performance of Algorithm \ref{algo1}, namely the explicit dependency of the convergence rate on the main parameters $n$ (dimension), $N$, $\gamma$ (strong convexity parameter for strongly convex functions), $\beta$.
To handle this task we prove upper bound for Algorithm \ref{algo1}.

{\bf Contributions.} Out main contributions can be summarized as follows:
\begin{enumerate}
    \item For strongly-convex case: under an adversarial noise assumption (see Assumption \ref{noise-ass}) we establish for all $\beta > 2$ the upper bound of order $ \mathcal{O} \left(\dfrac{n^{2-\textcolor{red}{\frac{1}{\beta}}}}{\gamma N^{\frac{\beta-1}{\beta}}} \right)$ for the optimization error of Algorithm \ref{algo1} for strongly convex case.
    \item For convex case: under an adversarial noise assumption (see Assumption \ref{noise-ass}) we establish for all $\beta > 2$ that after $N(\varepsilon) = \mathcal{O} \left(\dfrac{n^{2+\textcolor{red}{\frac{1}{\beta-1}}}}{\varepsilon^{2+\frac{2}{\beta-1}}} \right)$  iterations of Algorithm \ref{algo1} for the regularized function $f_{\gamma}(x):=f(x)+\frac{\varepsilon}{2R^2} \| x-x_0 \|^2$ we achieve the optimization error less than or equal to $\varepsilon$.
\end{enumerate}

For clarity we compare our results with state-of-the-art ones in Table \ref{tab-1} (dependence of optimization error $\varepsilon$ on the number of iteration $N$, dimension $n$ and $\beta$, $\gamma$) and Table \ref{tab-2} (dependence of the number of iteration $N$ on the optimization error $\varepsilon$, dimension $n$ and $\beta$, $\gamma$). To summarize the results we use $\tilde{\mathcal{O}}()$ , where $\tilde{\mathcal{O}}()$ coincides with $\mathcal{O}()$ up to the logarithmic factor. 

\begin{table}[H]\caption{The dependence of optimization error ($\varepsilon$) on $N$ (number of iterations), $n$ (dimension), $\gamma$, $\beta$}\label{tab-1}
\centering
\begin{tabular}{|c|c|c|}
\hline
& strongly convex & convex\\ \hline
lower bound \cite{akhavan2020exploiting}& $\mathcal{O} \left( \min \left(\dfrac{n}{\gamma N^{\frac{\beta-1}{\beta}}} , \dfrac{n}{\sqrt{N}}\right) \right)$ & $\textcolor{blue}{\mathcal{O} \left(\min \left(\dfrac{\sqrt{n}}{N^{\frac{\beta-1}{2\beta}}}, \dfrac{n}{\sqrt{N}} \right) \right)}$\\ \hline
\makecell{this work\\ (2020)} & $\tilde{\mathcal{O}} \left(\dfrac{n^{2-\textcolor{red}{\frac{1}{\beta}}}}{\gamma N^{\frac{\beta-1}{\beta}}} \right)$ & $\tilde{\mathcal{O}} \left(\dfrac{n^{1-\textcolor{red}{\frac{1}{2\beta}}}}{ N^{\frac{\beta-1}{2\beta}}} \right)$\\ \hline
\makecell{Akhavan, Pontil, \\ Tsybakov (2020) \cite{akhavan2020exploiting}} & $\tilde{\mathcal{O}} \left(\dfrac{n^{2}}{\gamma N^{\frac{\beta-1}{\beta}}} \right)$ & $\textcolor{blue}{\tilde{\mathcal{O}} \left(\dfrac{n}{N^{\frac{\beta-1}{2\beta}}} \right)}$\\ \hline
\makecell{Bach, Perchet \\(2016) \cite{bach2016highly}} & $\mathcal{O} \left(\dfrac{n^{2-\frac{2}{\beta+1}}}{(\gamma N)^{\frac{\beta-1}{\beta+1}}} \right)$  & $\mathcal{O} \left(\dfrac{n^{1-\frac{1}{\beta+1}}}{N^{\frac{\beta-1}{2(\beta+1)}}} \right)$\\ \hline
\makecell{Gasnikov and al. \\(2015), $\beta=2$, \cite{gasnikov2017stochastic}} & $\tilde{\mathcal{O}} \left(\dfrac{n}{\sqrt{\gamma N}} \right)$ & $\tilde{\mathcal{O}} \left(\dfrac{\sqrt{n}}{N^{\nicefrac 1 4}} \right)$\\ \hline
\makecell{Akhavan, Pontil, \\ Tsybakov (2020), \\ special case $\beta=2$ \cite{akhavan2020exploiting}} & $\tilde{\mathcal{O}} \left(\dfrac{n}{\sqrt{\gamma N}} \right)$ & $\textcolor{blue}{\tilde{\mathcal{O}} \left(\dfrac{\sqrt{n}}{N^{\nicefrac 1 4}} \right)}$\\ \hline
\makecell{Zhang and al. \\ (2020) \cite{zhang2020boosting}} & $\textcolor{blue}{\mathcal{O} \left(\dfrac{n}{\sqrt{\gamma N}} \right)}$ & $\mathcal{O} \left(\dfrac{\sqrt{n}}{N^{\nicefrac 1 4}} \right)$\\ \hline
\end{tabular}
\end{table}

\begin{table}[H]\caption{The dependence of $N$ (number of iterations) on $\varepsilon$, $n$ (dimension), $\gamma$, $\beta$}\label{tab-2}
\centering
\begin{tabular}{|c|c|c|}
\hline
 & \makecell{strongly\\ convex} & convex \\ \hline
 lower bound \cite{akhavan2020exploiting}& $\mathcal{O} \left(\min\left(\dfrac{n^{1+\frac{1}{\beta-1}}}{(\gamma\varepsilon)^{\frac{\beta}{\beta-1}}}, \dfrac{n^2}{\varepsilon^2}\right) \right)$ & $\textcolor{blue}{\mathcal{O} \left(\min \left(\dfrac{n^{1+\frac{1}{\beta-1}}}{\varepsilon^{2+\frac{2}{\beta-1}}}, \dfrac{n^2}{\varepsilon^2}\right) \right)}$ \\ \hline
 \makecell{this work\\ (2020)} & $\tilde{\mathcal{O}} \left(\dfrac{n^{2+\textcolor{red}{\frac{1}{\beta-1}}}}{(\gamma\varepsilon)^{\frac{\beta}{\beta-1}}} \right)$ & $\tilde{\mathcal{O}} \left(\dfrac{n^{2+\textcolor{red}{\frac{1}{\beta-1}}}}{\varepsilon^{2+\frac{2}{\beta-1}}} \right)$ \\ \hline
 \makecell{Akhavan, Pontil, \\ Tsybakov (2020) \cite{akhavan2020exploiting}} & $\tilde{\mathcal{O}} \left(\dfrac{n^{2+\frac{2}{\beta-1}}}{(\gamma\varepsilon)^{\frac{\beta}{\beta-1}}} \right)$ & $\textcolor{blue}{\tilde{\mathcal{O}} \left(\dfrac{n^{2+\frac{2}{\beta-1}}}{\varepsilon^{2+\frac{2}{\beta-1}}} \right)}$ \\ \hline
 \makecell{Bach, Perchet \\(2016) \cite{bach2016highly}}& $\mathcal{O} \left(\dfrac{n^{2+\frac{2}{\beta-1}}}{\gamma\varepsilon^{\frac{\beta+1}{\beta-1}}} \right)$ & $\mathcal{O} \left(\dfrac{n^{2+\frac{2}{\beta-1}}}{\varepsilon^{2+\frac{2}{\beta-1}}} \right)$ \\ \hline
 \makecell{Gasnikov and al. \\(2015), $\beta=2$ \cite{gasnikov2017stochastic}} & $\tilde{\mathcal{O}} \left(\dfrac{n^{2}}{\gamma\varepsilon^2} \right)$ & $\tilde{\mathcal{O}} \left(\dfrac{n^{2}}{\varepsilon^{3}} \right)$ \\ \hline
  \makecell{Akhavan, Pontil, \\ Tsybakov (2020), \\ special case $\beta=2$ \cite{akhavan2020exploiting}} & $\tilde{\mathcal{O}} \left(\dfrac{n^{2}}{\gamma\varepsilon^2} \right)$ & $\textcolor{blue}{\tilde{\mathcal{O}} \left(\dfrac{n^{2}}{\varepsilon^{3}} \right)}$ \\ \hline
  \makecell{Zhang and al. \\ (2020) \cite{zhang2020boosting}} & $\textcolor{blue}{\mathcal{O} \left(\dfrac{n^{2}}{\gamma\varepsilon^2} \right)}$ & $\mathcal{O} \left(\dfrac{n^{2}}{\varepsilon^{3}} \right)$ \\ \hline
\end{tabular}
\end{table}

{\bf Comments on Table \ref{tab-1} and Table \ref{tab-2}}. 
\begin{enumerate}
    \item Note that in Table \ref{tab-1} and Table \ref{tab-2}  the right column equals to the central one by $\gamma \sim \varepsilon$.
    \item Note that the results of this work have better dependency  $\varepsilon(N)$ or $N(\varepsilon)$ than Gasnikov's one-point method only if $\beta > 2$ else another technique in Theorem \ref{theorem_1} is better (see \cite{gasnikov2017stochastic} or Theorem 5.1 in \cite{akhavan2020exploiting}). The result in this work is achieved using both kernel smoothing technique and measure concentration inequalities.
    \item The lower bound for strongly convex case is got under conditions $\gamma \geq N^{-\nicefrac{1}{2} + \nicefrac{1}{\beta}}$ (otherwise it is better to use convex methods) and (see \cite{akhavan2020exploiting}) $2\gamma \leq \max\limits_{x\in Q} \| \nabla f(x) \|$ . 
    \item The bounds marked in blue are not given in this article and in references but they can be got. 
    \item Too optimistic bounds $\mathcal{O} \left(\dfrac{n^{2-\frac{4}{\beta+1}}}{(\gamma N)^{\frac{\beta-1}{\beta+1}}} \right)$ and $\mathcal{O} \left(\dfrac{n^{2}}{\varepsilon^{2+\frac{2}{\beta-1}}} \right)$ were claimed in \cite{bach2016highly} instead of $\mathcal{O} \left(\dfrac{n^{2-\frac{2}{\beta+1}}}{(\gamma N)^{\frac{\beta-1}{\beta+1}}} \right)$ and $\mathcal{O} \left(\dfrac{n^{2+\frac{2}{\beta-1}}}{\varepsilon^{2+\frac{2}{\beta-1}}} \right)$, but Akhavan, Pontil and Tsybakov \cite{akhavan2020exploiting} found error in Lemma 2 in \cite{bach2016highly} where factor $d$ of dimension ($n$ in our notation) is missing.
\end{enumerate}

\section{Preliminaries}\label{section:preliminaries}
In this section we give the necessary notation, definitions and assumptions.

\subsection{Notation}
Let $\la \cdot, \cdot \ra$ and $\| \cdot \|$ be the standard inner product and Euclidean norm on $\R^n$ respectively. For every closed convex set $Q \subset \R^n$ and for every $x\in \R^n$ let $\Pi_Q(x)$ denote the Euclidean projection of $x$ on $Q$. 

\subsection{Problem}
We address the conditional minimization problem
\begin{equation*}
    f(x) \rightarrow \min_{x \in Q},
\end{equation*}
where $f: U_{\varepsilon_0}(Q) \rightarrow \mathbb{R}$ -- function (convex or strongly convex), 
$Q \subset \mathbb{R}^n$ -- convex compact set (Euclidean metrics). 

The optimization problem can be formulated as follows: find the sequence $\{ x_k \}_{k=1}^{N} \subset Q$ minimizing the average regret:
\begin{equation*}
    \dfrac{1}{N}\sum\limits_{k=1}^N \mathbb{E} \left[f(x_k) - f(x^*)\right].
\end{equation*} 
If the average regret is less than or equal to $\varepsilon$ then the optimization error of averaged estimator $\overline{x}_N = \frac{1}{N}\sum\limits_{k=1}^N x_k$ is also less than or equal to $\varepsilon$:
\begin{equation*}
    \mathbb{E} \left[f(\overline{x}_N) - f(x^*)\right] \leq \dfrac{1}{N}\sum\limits_{k=1}^N \mathbb{E} \left[f(x_k) - f(x^*)\right] \leq \varepsilon.
\end{equation*} 

\subsection{Noise}
The function values $f(x_k+\tau_k r_k e_k)$ and $f(x_k-\tau_k r_k e_k) $ are given with additive noise $\xi_k$ and $\xi'_k$ respectively (see Algorithm \ref{algo1}). Recall that the Algorithm \ref{algo1} is randomized: the scalars $r_1, \dots, r_N$ are distributed uniformly on $[-1,1]$ and the  vectors $e_1, \dots, e_N$ are distributed uniformly on the Euclidean unit sphere $S_n=\{e\in \mathbb{R}^n: \, \|e\|=1 \}$. 

\begin{assumption}\label{noise-ass}
    For all $k = 1, 2, \dots, N$ it holds that
    \begin{enumerate}
        \item\label{i} $\E[\xi_k^2] \leq \sigma^2$ and $\E[{\xi'}_k^2] \leq \sigma^2$ where $\sigma \geq 0$;
        \item\label{ii} the random variables $\xi_k$ and $\xi'_k$ are independent from $e_k$ and $r_k$, the random variables $e_k$ and $r_k$ are independent.
    \end{enumerate}
\end{assumption} 

We do not assume here neither zero-mean of $\xi_k$ and $\xi'_k$ nor i.i.d of $\{\xi_k\}_{k=1}^{N}$ and $\{\xi'_k\}_{k=1}^{N}$ as condtition \ref{ii} from assumption \ref{noise-ass} allows to avoid that.

\subsection{Higher order smoothness}

Let $l$ denote maximal integer number strictly less than $\beta$. Let ${\cal F}_{\beta}(L)$ denote the set of all functions $f: \mathbb{R}^n \rightarrow \mathbb{R}$ which are differentiable $l$ times and for all $x,z\in U_{\varepsilon_0} (Q)$ satisfy Hölder condition:
\begin{equation}\label{Hölder-condition}
    \Biggl| f(z) - \sum_{0\leq|m|\leq l} \dfrac{1}{m!} D^m f(x) (z-x)^m \Biggr| \leq L\|z-x\|^{\beta},
\end{equation}
where $L>0$, the sum is over multi-index $m=(m_1, \dots, m_n) \in \mathbb{N}^n$, we use the notation $m! = m_1! \cdot \dots \cdot m_n!$, $|m| = m_1 + \dots + m_n$ and we defined
\begin{equation*}
     D^m f(x) z^m = \dfrac{\partial^{|m|} f(x)}{\partial^{m_1} x_1  \dots \partial^{m^n} x_n} z_1^{m_1} \cdot \dots \cdot z_n^{m_n}, \; \forall z=(z_1, \dots, z_n) \in \R^n.
\end{equation*}

Let ${\cal F}_{\gamma, \beta}(L)$ denote the set of $\gamma$-strongly convex functions $f \in {\cal F}_{\beta}(L)$. Recall that $f$ is called $\gamma$-strongly convex for some $\gamma > 0$ if for all $x, z \in \R^n$ it holds that $f(z) \geq f(x) + \langle \nabla f(x), z-x \rangle + \frac{\gamma}{2} \|x-z\|^2$.

\subsection{Kernel}
For gradient estimator $\widetilde{g_k}$ we use the kernel 
\begin{equation*}
    K: [-1, 1] \rightarrow \mathbb{R},
\end{equation*}
satisfying
\begin{equation}\label{kernel-properties}
    \E[K(r)] = 0, \,
    \E[rK(r)] = 1, \, 
    \E [r^j K(r)] = 0,\, j=2, \dots, l, \,
    \E\left[ |r|^{\beta}|K(r)|\right] \leq \infty,
\end{equation}
where $r$ is a uniformly distributed on $[-1, 1]$ random variable. This helps us to get better bounds on the gradient bias $\| \widetilde{g_k} - \nabla f(x_k) \|$ (see Theorem \ref{theorem_1} for details). 

A weighted sum of Legendre polynoms is an example of such kernels:
\begin{equation}\label{kernels}
    K_{\beta}(r) := \sum\limits_{m=0}^{l(\beta)} p_{m}'(0)p_m(r),
\end{equation}
where $l(\beta)$ is maximal integer number strictly less than $\beta$ and $p_m(r) = \sqrt{2m+1} L_m(r)$, $L_m(u)$ is Legendre polynom. We have
\begin{equation*}
    \mathbb E \left[ p_m p_{m'}\right]=\delta(m-m').
\end{equation*}

As $\{p_m(r)\}_{m=0}^{j}$ is a basis for polynoms of degree less than or equal to $j$ we can represent $u^j := \sum\limits_{m=0}^{j} b_m p_m(r)$ for some integers $\{b_m\}_{m=0}^{j}$ (they depend on $j$). 

Let's calculate the expectation

\begin{equation*}
    \mathbb E \left[ r^j K_{\beta}(r)\right] = \sum\limits_{m=0}^{j} b_m p_m'(0) = (r^j)'|_{r=0} = \delta(j-1),
\end{equation*}
here $\delta(0) = 1$ and $\delta(x) = 1$ if $x \neq 0$. We proved that the presented $K_{\beta}(r)$ satisfies \eqref{kernel-properties}. We have the following kernels for different betas (see Figure \ref{fig:kernels}):

\begin{eqnarray*}
    &K_{\beta}(r) = 3r, \quad &\beta \in [2, 3],\\
    &K_{\beta}(r) = \dfrac{15r}{4} (5-7r^2), \quad &\beta \in (3, 5],\\
    &K_{\beta}(r) = \dfrac{105r}{64} (99r^4 - 126r^2 + 35), \quad &\beta \in (5, 7].
\end{eqnarray*}

\begin{figure}[h]
\centering
\includegraphics[width =  0.78\linewidth]{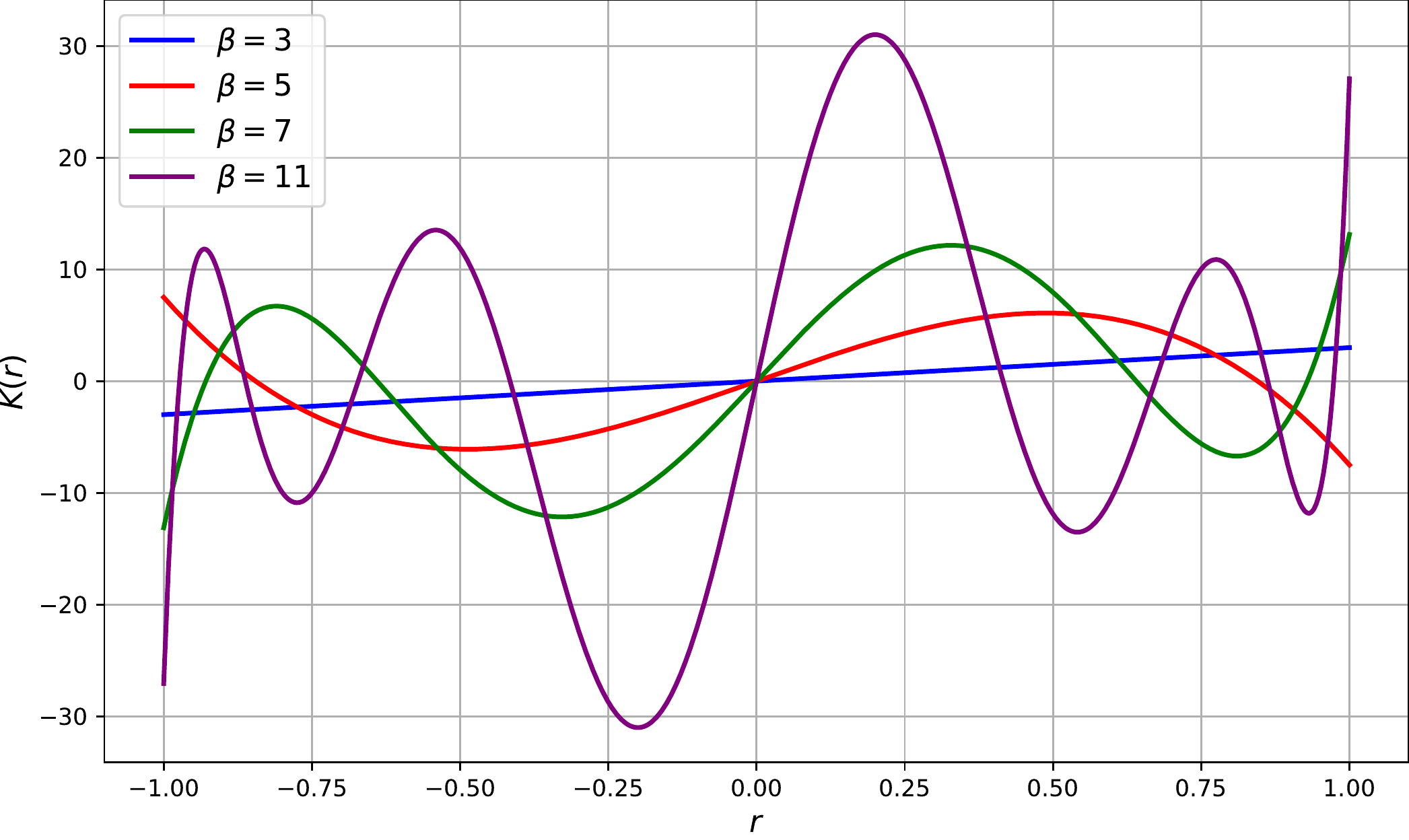}
\caption{Examples of kernels from \eqref{kernels}}
\label{fig:kernels}
\end{figure}

For Theorem \ref{theorem_1} and Theorem \ref{theorem_2}  we need to introduce the constants
\begin{equation}\label{kappa-beta}
    \kappa_{\beta} = \int|u|^{\beta}|K(u)|\,du
\end{equation}
and 
\begin{equation}\label{kappa-squared}
    \kappa =\int K^2(u) \, du.
\end{equation}
It is proved in \cite{bach2016highly} that $\kappa_{\beta}$ and $\kappa$ do not depend on $n$, they depend only on $\beta$: 
\begin{equation}\label{kappa-beta-bound}
    \kappa_{\beta} \leq 2\sqrt{2} (\beta - 1),
\end{equation}
\begin{equation}\label{kappa-squared-bound}
    \kappa \leq \sqrt{3} \beta^{\nicefrac{3}{2}}.
\end{equation}

\section{Theorems}\label{section:theorems} 
In this section we prove upper bounds on the optimization error of Algorithm \ref{algo1} for strongly convex function (Theorem \ref{theorem_1}) and for convex function (Theorem \ref{theorem_2}).

\begin{theorem} \label{theorem_1}
Let $f \in {\cal F}_{\gamma, \beta}(L)$ with $\gamma$, $L > 0$ and $\beta > 2$. 
Let Assumption \ref{noise-ass} hold 
and let $Q$ be a convex compact subset of $\R^n$.
Let $f$ be $G$-Lipschitz on the Euclidean $\tau_1$-neighborhood of $Q$. 

Then the optimization error of averaged estimator $\overline{x}_N = \frac{1}{N}\sum\limits_{k=1}^N x_k$ where the points $x_k$ are given by Algorithm \ref{algo1} with parameters
\begin{equation*}
    \tau_k = \left(\dfrac{3\kappa\sigma^2n}{2(\beta-1)(\kappa_\beta L)^2}\right)^{\frac{1}{2\beta}}k^{-\frac{1}{2\beta}}, \quad \alpha_k=\dfrac{2}{\gamma k}, \quad k =1,\dots, N
\end{equation*}
satisfies
\begin{equation*}
    \mathbb{E}\left[f(\overline{x}_N)-f(x^*)\right]\leq \dfrac{1}{\gamma} \left(n^{2-\frac{1}{\beta}}\dfrac{A_1}{N^{\frac{\beta-1}{\beta}}}+A_2\dfrac{n(1+\ln N)}{N} \right),
\end{equation*}
where $A_1=3\beta(\kappa \sigma^2)^{\frac{\beta-1}{\beta}}(\kappa_{\beta}L)^{\frac{2}{\beta}}$, $A_2=c^*\kappa G^2$, $\kappa_{\beta}$ and $\kappa$ are constants depending only on $\beta$, see \eqref{kappa-beta} and \eqref{kappa-squared}. 
\end{theorem}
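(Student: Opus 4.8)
The plan is to run the standard analysis of projected stochastic gradient descent for strongly convex objectives, but with the key observation that the estimator $\widetilde{g_k}$ is neither unbiased nor uniformly bounded in variance; instead its bias is controlled via the kernel properties \eqref{kernel-properties} together with the Hölder condition \eqref{Hölder-condition}, and its second moment is controlled via a measure-concentration argument that yields the improved $n^{2-1/\beta}$ rather than $n^2$. First I would write the one-step recursion: by nonexpansiveness of $\Pi_Q$,
\begin{equation*}
    \|x_{k+1}-x^*\|^2 \leq \|x_k-x^*\|^2 - 2\alpha_k \la \widetilde{g_k}, x_k - x^* \ra + \alpha_k^2 \|\widetilde{g_k}\|^2 .
\end{equation*}
Taking conditional expectation given $x_k$, I would split $\E[\la \widetilde{g_k}, x_k-x^*\ra] = \la \nabla f(x_k), x_k-x^*\ra + \la \E[\widetilde{g_k}] - \nabla f(x_k), x_k - x^*\ra$ and then use $\gamma$-strong convexity, $\la \nabla f(x_k), x_k - x^*\ra \geq f(x_k) - f(x^*) + \tfrac{\gamma}{2}\|x_k-x^*\|^2$, to extract the optimization gap.

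The two technical inputs I need are: (i) a bias bound of the form $\|\E[\widetilde{g_k}\mid x_k] - \nabla f(x_k)\| \leq \kappa_\beta L \tau_k^{\beta-1}$ (times a harmless dimensional/constant factor), obtained by computing $\E_{r,e}[\widetilde{g_k}]$, recognizing that integrating over the sphere with weight $K(r)$ picks out $\nabla f(x_k)$ up to the Taylor remainder, and bounding that remainder by the Hölder inequality \eqref{Hölder-condition} while using the vanishing-moment properties $\E[r^jK(r)]=0$, $j=2,\dots,l$; and (ii) a second-moment bound $\E[\|\widetilde{g_k}\|^2 \mid x_k] \leq \tfrac{3\kappa \sigma^2 n}{2\tau_k^2} \cdot \tfrac{1}{?} + (\text{Lipschitz term}) \leq c_1 \tfrac{\kappa\sigma^2 n}{\tau_k^2} + c_2 \kappa n^{2-1/\beta} G^2$ or similar — here the noise part $\tfrac{n^2}{\tau_k^2}(y_k-y_k')^2 K(r_k)^2$ contributes $\sigma^2 n$ after using $\E[e e^\top] = \tfrac1n I$ and $\E[K(r)^2]=\kappa$ (the extra $1/n$ from the sphere kills one power of $n$), and the deterministic part $\tfrac{n^2}{\tau_k^2}(f(x_k+\tau_k r_k e_k)-f(x_k-\tau_k r_k e_k))^2 K(r_k)^2$ is the one where the measure-concentration inequality enters: rather than crudely bounding $|f(x_k+\tau_k r e)-f(x_k-\tau_k r e)| \leq 2\tau_k G$ (which would give $n^2 G^2$), one exploits that $\la \nabla f, e\ra$ concentrates around $0$ at scale $\|\nabla f\|/\sqrt n$ over the uniform sphere, shaving the bound to order $n^{2-1/\beta}$. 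I would isolate this as the lemma behind $A_2 = c^*\kappa G^2$ and the $n^{2-1/\beta}$ improvement — \emph{this is the main obstacle}, since getting the exact exponent $2-1/\beta$ requires interpolating between the two regimes of the concentration tail at the scale dictated by $\tau_k$.

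Once (i) and (ii) are in hand, I would plug in $\alpha_k = 2/(\gamma k)$ and the stated $\tau_k \propto k^{-1/(2\beta)}$ (chosen precisely to balance the bias-squared term $\alpha_k \|\text{bias}\|\cdot\|x_k-x^*\| \sim \tau_k^{\beta-1}$, handled via Young's inequality against the $\tfrac{\gamma}{2}\|x_k-x^*\|^2$ slack, and the variance term $\alpha_k^2 \kappa\sigma^2 n/\tau_k^2$). Rearranging the recursion gives
\begin{equation*}
    f(x_k)-f(x^*) \leq \tfrac{1}{2}\left(\tfrac{1}{\alpha_k}-\tfrac{\gamma}{2}\right)\|x_k-x^*\|^2 - \tfrac{1}{2\alpha_k}\E\|x_{k+1}-x^*\|^2 + \tfrac{\alpha_k}{2}\E\|\widetilde{g_k}\|^2 + C\kappa_\beta^2 L^2 \tau_k^{2\beta-2}/\gamma,
\end{equation*}
and with $\alpha_k = 2/(\gamma k)$ the coefficients telescope (the classical Lacoste-Julien–Schmidt–Bach weighting), so averaging $\tfrac1N\sum_{k=1}^N$ leaves only the $\sum \alpha_k \E\|\widetilde{g_k}\|^2$ and $\sum \tau_k^{2\beta-2}/\gamma$ contributions. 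Evaluating $\sum_{k=1}^N \tfrac{1}{k}\cdot\tfrac{\kappa\sigma^2 n}{\tau_k^2} \sim \kappa\sigma^2 n \sum k^{1/\beta-1} \sim \kappa\sigma^2 n \cdot N^{1/\beta}$ and $\sum_{k=1}^N \tau_k^{2\beta-2} \sim N^{1/\beta}$ produces the dominant $\tfrac{1}{\gamma} n^{2-1/\beta} A_1 N^{-(\beta-1)/\beta}$ after dividing by $N$, while the Lipschitz/concentration part $\sum \tfrac{1}{k}\cdot\kappa n^{2-1/\beta}G^2$ gives the $A_2 n(1+\ln N)/N$ term (with the $n^{2-1/\beta}$ there actually being the $n$ claimed — I would double-check which power lands on $A_2$ versus being absorbed, since the statement has $A_2 = c^*\kappa G^2$ multiplying $n$, suggesting the concentration gain applies to the leading term and a cruder $n$ bound suffices for the lower-order Lipschitz term). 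Finally, the optimization error of $\overline{x}_N$ follows from Jensen/convexity as already noted in the Problem subsection. The routine parts are the two sums and the telescoping; the creative part is the concentration lemma fixing the exponent.
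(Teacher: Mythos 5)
Your skeleton matches the paper's proof: the projected-SGD recursion combined with $\gamma$-strong convexity, a bias bound from the kernel's vanishing moments plus the H\"older remainder, a second-moment bound, the telescoping with $\alpha_k=2/(\gamma k)$, and the two sums $\sum_k k^{-1+1/\beta}\lesssim \beta N^{1/\beta}$ and $\sum_k k^{-1}\le 1+\ln N$. However, the mechanism you propose for the exponent $2-\frac1\beta$ --- which you correctly flag as the main obstacle --- is not the right one, and two of your intermediate claims are false.

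First, the noise contribution to the second moment is $\frac{3\kappa\sigma^2 n^2}{2\tau_k^2}$, not $\sigma^2 n/\tau_k^2$: since $\|\widetilde{g_k}\|^2=\frac{n^2}{4\tau_k^2}(y_k-y_k')^2K^2(r_k)\|e_k\|^2$ with $\|e_k\|=1$, there is no $\E[e e^\top]=\frac1n\mathbb{I}$ saving here; that identity is only used to show the first-order Taylor term of the estimator has mean $\nabla f(x_k)$. Second, the sphere-concentration input for the function-difference part of the second moment is Shamir's Lemma 9, giving $\E_e\bigl[(f(x+\tau re)-f(x-\tau re))^2\bigr]\le 4cG^2\tau^2/n$ and hence a clean factor $c^*\kappa nG^2$; this is exactly the lower-order $A_2\,n(1+\ln N)/N$ term and has nothing to do with the exponent $2-\frac1\beta$ --- no interpolation between regimes of a concentration tail occurs anywhere. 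The exponent actually arises elsewhere: the bias is never bounded in norm (where the ``dimensional factor'' would be a full $n$, which is not harmless and would reproduce the old $n^2$ rate) but is paired directly with $x_k-x$, and one uses $\left|\E_e[\langle e,x_k-x\rangle]\right|\le\|x_k-x\|/\sqrt n$ to reduce the prefactor $n$ of the estimator to $\sqrt n$, yielding a bias term $\kappa_\beta L\sqrt{n}\,\tau_k^{\beta-1}\|x_k-x\|$ and, after Young's inequality against the $\frac{\gamma}{4}\|x_k-x\|^2$ slack, a contribution $\frac{(\kappa_\beta L)^2}{\gamma}\,n\,\tau_k^{2(\beta-1)}$ scaling as $n$. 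Minimizing $n(\kappa_\beta L)^2\tau_k^{2(\beta-1)}+n^2\frac{3\kappa\sigma^2}{2k\tau_k^2}$ over $\tau_k$ (which is why $\tau_k\propto n^{1/(2\beta)}k^{-1/(2\beta)}$) gives an optimal value of order $(n^2)^{1-1/\beta}\,n^{1/\beta}=n^{2-1/\beta}$. So the whole improvement is the $\sqrt n$ saving in the bias--iterate inner product, propagated through the $\tau_k$ balance against an unimproved $n^2$ noise term; as written, your argument would not produce the claimed exponent.
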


\begin{proof}
{\bf Step 1.}
Fix an arbitrary $x \in Q$. As $x_{k+1}$ is the Euclidean projection we have $\|x_{k+1} - x\|^2 \leq \| x_k - \alpha_k \widetilde{g_k} - x \|^2 $ which is equivalent to 
\begin{equation}
    \langle \widetilde{g_k}, x_k-x \rangle \leq \dfrac{\| x_k - x \|^2 - \| x_{k+1} - x \|^2}{2\alpha_k} + \dfrac{\alpha_k}{2} \| \widetilde{g_k} \|^2.
\end{equation}

By the strong convexity assumption we have
\begin{equation}
    f(x_k) - f(x) \leq \langle \nabla f(x_k), x_k - x \rangle - \dfrac{\gamma}{2} \| x_k - x \|^2.
\end{equation}

Combining the last two inequations we obtain
\begin{equation}
\begin{split}
f(x_k) - f(x) \leq & \langle \nabla f(x_k) - \widetilde{g_k}, x_k - x \rangle 
+ \dfrac{\| x_k - x \|^2 - \| x_{k+1} - x \|^2}{2\alpha_k} \\
 & + \dfrac{\alpha_k}{2} \| \widetilde{g_k} \|^2
    - \dfrac{\gamma}{2} \| x_k - x \|^2. \\
\end{split}
\end{equation}

Taking conditional expectation given $x_k$ with respect to $r_{k}$, $\xi_{k}$ and $\xi'_{k}$ we obtain
\begin{equation}\label{regret_bound}
\begin{split}
f(x_k) - f(x) \leq & \langle \nabla f(x_k) -  \E \left[\widetilde{g_k} | x_k \right] , x_k - x \rangle  + \dfrac{\alpha_k}{2} \E \left[ \| \widetilde{g_k} \|^2 | x_k \right] \\
& +  \dfrac{\| x_k - x \|^2 - \E \left[\| x_{k+1} - x \|^2 | x_k \right]}{2\alpha_k} - \dfrac{\gamma}{2} \| x_k - x \|^2. \\
\end{split}
\end{equation}

{\bf Step 2 (Bounding bias term).} Our aim is to bound the first term in \eqref{regret_bound}, namely $\langle \nabla f(x_k) -  \E \left[\widetilde{g_k} | x_k \right] , x_k - x \rangle $.
Using the Taylor expansion we have
\begin{equation}
\begin{split}
    f\left(x_k + \tau_k r_k e_k\right)
    =& f(x_k) + \langle \nabla f(x_k), \tau_k r_k e_k \rangle \\
    +& \sum_{2\leq |m| \leq l} \dfrac{(\tau_k r_k)^{|m|}}{m!} D^{(m)}f(x_k) e_k^{m} + R(\tau_k r_k e_k),
\end{split}
\end{equation}
where by assumption $|R(\tau_k r_k e_k)| \leq L \|\tau_k r_k e_k\|^{\beta} = L (\tau_k \cdot |r_k|)^{\beta} $.
Thus, 
\begin{equation}\label{grad_taylor_expansion}
\begin{split}
    \widetilde{g_k} = &
     \Bigl( \langle \nabla f(x_k), \tau_k r_k e_k \rangle 
    + \sum_{2\leq |m| \leq l, |m| \text{ odd}} \dfrac{(\tau_k r_k)^{|m|}}{m!} D^{(m)}f(x_k) e_k^{m}\\
    &+ \frac{1}{2} R(\tau_k r_k e_k) - \frac{1}{2} R(-\tau_k r_k e_k) + \xi_k - \xi'_k \Bigl) \dfrac{n}{\tau_k} K(r_k) e_k .
\end{split}
\end{equation}

Using the properties of the smoothing kernel $K$, independence of $e_k$ and $r_k$ (Assumption \ref{noise-ass}) and the fact that $\E\left[ e_k e_k^{T} \right] = \frac 1 n \mathbb{I}_{n \times n}$ we obtain
\begin{equation}\label{first_order_term}
    \E_{e_k, r_k} \left[ \left\langle \nabla f(x_k), \tau_k r_k e_k \right\rangle \dfrac{n}{\tau_k} K(r_k) e_k \big| x_k \right] = \nabla f(x_k).
\end{equation}

Using the fact that $\E \left[ r_k^{|m|} K(r_k) \right] = 0$ if $2 \leq |m| \leq l$ or $|m| = 0$ and Assumption \ref{noise-ass} we have
\begin{equation}\label{mid_order_term}
    \Bigl(\sum_{2\leq |m| \leq l, |m| \text{ odd}} \dfrac{(\tau_k r_k)^{|m|}}{m!} D^{(m)}f(x_k) e_k^{m}
    + \xi_k - \xi'_k \Bigl) \dfrac{n}{\tau_k} K(r_k) e_k = 0.
\end{equation}

Combining \eqref{grad_taylor_expansion}, \eqref{first_order_term} and \eqref{mid_order_term} and using the definition of $\kappa_{\beta}$ we obtain
\begin{multline}\label{step2_prefinal}
    \left|   \langle \nabla f(x_k) -  \E \left[\widetilde{g_k} | x_k \right] , x_k - x \rangle \right|
     = \\
     =  \left| \E\left[ \left( \frac{1}{2} R(\tau_k r_k e_k) - \frac{1}{2} R(-\tau_k r_k e_k) \right) \dfrac{n}{\tau_k} K(r_k) \la e_k, x_k - x \ra \middle| x_k \right] \right| \\
     \leq L\tau_k^{\beta - 1} \cdot \E_{r_k} \left[ |r_k|^{\beta} K(r_k) \right] \cdot n\left|\E_{e_k} \left[\la e_k, x_k - x \ra \middle| x_k \right] \right| \\ 
     \leq \kappa_{\beta} L \sqrt{n} \tau_k^{\beta - 1} \| x_k - x\|, \\
\end{multline}
where in the last inequality the fact that $\left|\E_{e} \left[\la e, s \ra \right] \right|^2 \leq \E_{e} \left[\la e, s \ra^2 \right] = \frac{\|s\|^2}{n} $ was used (the fact from concentration measure theory). Applying the inequality $ab \leq \nicefrac{1}{2} (a^2 + b^2)$ to the last expression in \eqref{step2_prefinal}  we finally get 
\begin{equation}\label{step2}
    \left|   \langle \nabla f(x_k) -  \E \left[\widetilde{g_k} | x_k \right] , x_k - x \rangle \right|
    \leq  \dfrac{(\kappa_{\beta} L)^2}{\gamma} n \tau_k^{2(\beta-1)} + \dfrac{\gamma}{4} \|x_k-x\|^2 .
\end{equation}

{\bf Step 3 (Bounding second moment of gradient estimator). }
Our aim is to estimate $\E \left[ \| \widetilde{g_k} \|^2 | x_k \right]$ which is the second term in \eqref{regret_bound}. The expectation here is with respect to $r_k$, $\xi_k$ and $\xi'_k$. To lighten the presentation and withous loss of generality we drop the lower script $k$ in all quantities. 

We have
\begin{equation}
\begin{split}
    \| \widetilde{g} \|^2 =& \dfrac{n^2}{4\tau^2} \|(f(x+\tau r e) - f(x - \tau r e) + \xi - \xi') K(r) e\|^2 \\
    =& \dfrac{n^2}{4\tau^2} \left((f(x+\tau r e) - f(x - \tau r e) + \xi - \xi') \right)^2 K^2(r).\\
\end{split}
\end{equation}

Using the inequality $(a+b+c)^2 \leq 3(a^2 + b^2 + c^2)$ and Assumption \ref{noise-ass} we get
\begin{equation}\label{dispersion_bound}
    \E \left[ \| \widetilde{g} \|^2 | x \right] 
    \leq \dfrac{3n^2}{4\tau^2} \left( \E\left[(f(x+\tau r e) - f(x - \tau r e))^2 K^2(r) \middle| x \right] + 2\kappa \sigma^2 \right).
\end{equation}

Lemma 9 in \cite{shamir2017optimal} states that for any function $f$ which is $G-Lipschitz$ with respect to 2-norm, it holds that if $e$ is uniformly distributed on the Euclidean unit sphere, then
\begin{equation}\label{lemma_shamir}
    \sqrt{\E \left[ (f(e) - \E[f(e)])^4 \right]} \leq  \dfrac{cG^2}{n},
\end{equation}
where $c < 3$ is a positive numerical constant.

Using \eqref{lemma_shamir}, symmetry of Euclidean unit sphere and the inequality $(a+b)^2 \ \leq 2(a^2 + b^2)$ we obtain
\begin{multline}
    \E \left[ \left(f(x+e) -f(x-e)\right)^2 \middle| x \right] = \E_e \left[ \left(f(x+e) -f(x-e)\right)^2  \right] \\
    \leq \E_e \left[ \left( \left(f(x+e) - \E_e [f(x+e)]\right) - \left(f(x-e) - \E_e [f(x-e)]\right) \right)^2  \right]\\
    \leq 2\E_e \left[ \left(f(x+e) - \E_e [f(x+e)]\right)^2 \right] + 2\E_e \left[ \left(f(x-e) - \E_e [f(x-e)]\right)^2 \right] \\
    \leq 2\sqrt{\E_e \left[ \left(f(x+e) - \E_e [f(x+e)]\right)^4 \right]} + 2\sqrt{\E_e \left[ \left(f(x-e) - \E_e [f(x-e)]\right)^4 \right]} \\
    \leq \dfrac{4cG^2}{n},
\end{multline}
so we have
\begin{equation}\label{delta_f_bound}
    \E \left[ \left(f(x+\tau re) -f(x-\tau re)\right)^2 \middle| x \right] \leq \dfrac{4c(\tau r)^2 G^2}{n} \leq \dfrac{4c \tau^2 G^2}{n}.
\end{equation}

By substituting \eqref{delta_f_bound} into \eqref{dispersion_bound}, using independence of $e$ and $r$ and returning the lower script $k$ we finally get
\begin{equation}\label{step3}
    \E \left[ \| \widetilde{g_k} \|^2 | x \right] 
    \leq \kappa \left(c^* n G^2  + \dfrac{3(n\sigma)^2}{2\tau_k^2} \right),
\end{equation}
where $c^* = 3c$.

{\bf Step 4. } Let $\rho_k^2$ denote $\E[\|x_k-x\|^2]$. Substituting \eqref{step2} and \eqref{step3} into \eqref{regret_bound}, taking full expectation and summing over $k$ we obtain \begin{equation}\label{step4_begin}
\begin{split}
    \sum_{k=1}^N \E[f(x_k) - f(x)] 
    \leq& \sum_{k=1}^N \left(\dfrac{(\kappa_{\beta} L)^2}{\gamma} n \tau_k^{2(\beta-1)} 
    + \dfrac{\alpha_k}{2} \kappa \left(c^* n G^2  + \dfrac{3(n\sigma)^2}{2\tau_k^2} \right) \right)\\
    & +\sum_{k=1}^N \left( \dfrac{\rho_k^2 - \rho_{k+1}^2}{2\alpha_k} - \left(\dfrac{\gamma}{2} - \dfrac{\gamma}{4} \right) \rho_k^2 \right).\\
\end{split}
\end{equation}

Let $\rho_{N+1}^2 = 0$. Then setting $\alpha_k = \dfrac{2}{\gamma k}$ yields 
\begin{equation}\label{sum_rho}
\begin{split}
    \sum_{k=1}^{N} \left( \dfrac{\rho_k^2 - \rho_{k+1}^2}{2\alpha_k} - \dfrac{\gamma}{4} \rho_k^2 \right) &\leq
    \rho_1^2 \left( \dfrac{1}{2\alpha_1} - \dfrac{\gamma}{4} \right) + \sum_{k=2}^{N+1} \rho_k^2 \left( \dfrac{1}{2\alpha_k} - \dfrac {1}{2\alpha_{k-1}}- \dfrac{\gamma}{4}\right) \\
    &=\rho_1^2 \left( \dfrac{\gamma}{4}-\dfrac{\gamma}{4} \right) + \sum_{k=2}^{N+1} \rho_k^2 \left( \dfrac{\gamma}{4}-\dfrac{\gamma}{4} \right) = 0.
\end{split}
\end{equation}

Substituting \eqref{sum_rho} into \eqref{step4_begin} with $\alpha_k = \frac{2}{\gamma k}$ we obtain 
\begin{equation}\label{step4_prefinal}
\begin{split}
    \sum_{k=1}^N \E[f(x_k) - f(x)]  
    &\leq \dfrac{1}{\gamma} \sum_{k=1}^N   \left((\kappa_{\beta} L)^2 n \tau_k^{2(\beta-1)} 
    + \kappa \left(c^* n G^2  + \dfrac{3(n\sigma)^2}{2\tau_k^2} \right) \dfrac{1}{k} \right)\\
    &= \dfrac{1}{\gamma} \sum_{k=1}^N \left( \left[n \cdot (\kappa_{\beta} L)^2 \tau_k^{2(\beta-1)} +
     n^2 \cdot \dfrac{3\kappa \sigma^2}{2k\tau_k^2} \right] +  \dfrac{c^* \kappa n G^2}{k}\right).\\
\end{split}
\end{equation}

If $\sigma > 0$ then $\tau_k = {\left(\dfrac{3\kappa \sigma^2 n}{2(\beta -1)(\kappa_{\beta} L)^2}\right)}^{\frac{1}{2\beta}} k^{-\frac{1}{2\beta}}$ is the minimizer of square brackets. Plugging this $\tau_k$ in \eqref{step4_prefinal} and using two inequalities: for the expression in square brackets $\sum\limits_{k=1}^N k^{-1+\nicefrac{1}{\beta}} \leq \beta N^{\nicefrac{1}{\beta}}$ (if $\beta > 2$) and for the term after square brackets $\sum\limits_{k=1}^{N} \frac{1}{k} \leq 1 + \ln N$ we get
\begin{equation}
    \sum_{k=1}^N \E[f(x_k) - f(x)] 
    \leq \dfrac{1}{\gamma} \left(n^{2-\frac{1}{\beta}} A_1 N^{\frac{1}{\beta}}+A_2n(1+\ln{N}) \right)
\end{equation}
with $A_1$ and $A_2$ from the formulation of Theorem \ref{theorem_1}. Due to the convexity of $f$ we finally prove the theorem
\begin{equation}
    \mathbb{E}\left[f(\overline{x}_N)-f(x^*)\right]\leq \dfrac{1}{\gamma} \left(n^{2-\frac{1}{\beta}}\dfrac{A_1}{N^{\frac{\beta-1}{\beta}}}+A_2\dfrac{n(1+\ln{N})}{N} \right).
\end{equation}
\EndProof
\end{proof}

We emphasize that the usage of kernel smoothing technique, measure concentration inequalities and the assumption that $\xi_k$ is independent from $e_k$ or $r_k$ (Assumption \ref{noise-ass}) lead to the results better  than the state-of-the-art ones for $\beta > 2$ (see Table \ref{tab-1} and Table \ref{tab-2}). The last assumption also allows us not to assume neither zero-mean of $\xi_k$ and $\xi'_k$ nor i.i.d of $\{\xi_k\}_{k=1}^{N}$ and $\{\xi'_k\}_{k=1}^{N}$.

\begin{theorem} \label{theorem_2}

Let $f \in {\cal F}_{\beta}(L)$ with $\gamma$, $L > 0$ and $\beta > 2$. 
Let Assumption \ref{noise-ass} hold 
and let $Q$ be a convex compact subset of $\R^n$.
Let $f$ be $G$-Lipschitz on the Euclidean $\tau_1$-neighborhood of $Q$. Let $\overline{x}_N$ denote $\frac{1}{N}\sum\limits_{k=1}^N x_k$.

Then we achieve the optimization error
$\mathbb{E}\left[f(\overline{x}_N)-f(x^*)\right]\leq \varepsilon$
after $N(\varepsilon)$ steps of Algorithm \ref{algo1} with settings from Theorem \ref{theorem_1} for the regularized function: $f_{\gamma}(x):=f(x)+\frac{\gamma}{2} \| x-x_0 \|^2$, where $\gamma \leq \frac{\varepsilon}{R^2}$, $R=\|x_0-x^*\|$, $x_0 \in Q$ -- arbitrary point.

\begin{equation*}
    N(\varepsilon)=\max\left\{ \left(R\sqrt{2A_1}\right)^{\frac{2\beta}{\beta-1}}\dfrac{n^{2+\frac{1}{\beta-1}}}{\varepsilon^{2+\frac{2}{\beta-1}}},\left(R\sqrt{2c'A_2}\right)^{2(1+\rho)}\dfrac{n^{1+\rho}}{\varepsilon^{2(1+\rho)}}\right\},
\end{equation*}
where  $A_1=3\beta(\kappa \sigma^2)^{\frac{\beta-1}{\beta}}(\kappa_{\beta}L)^{\frac{2}{\beta}}$, $A_2=c^*\kappa G^2$ -- constants from Theorem \ref{theorem_1}, $\rho > 0$ -- arbitrarily small positive number. 

\end{theorem}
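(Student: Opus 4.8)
The plan is to reduce the purely convex case to the $\gamma$-strongly convex case already settled in Theorem \ref{theorem_1}, by running Algorithm \ref{algo1} on the regularized objective $f_{\gamma}(x)=f(x)+\frac{\gamma}{2}\|x-x_{0}\|^{2}$ rather than on $f$ itself. The first thing to verify is that the regularization does not move $f_\gamma$ out of the admissible class: since $\beta>2$ we have $l\geq 2$, so the degree-$l$ Taylor polynomial of the quadratic term reproduces it exactly and the Hölder remainder bound \eqref{Hölder-condition} for $f_{\gamma}$ holds with the same constant $L$; hence $f_{\gamma}\in\mathcal{F}_{\gamma,\beta}(L)$, and $f_{\gamma}$ is Lipschitz on the $\tau_{1}$-neighbourhood of $Q$ with a constant that differs from $G$ only by an additive term of order $\gamma\cdot\mathrm{diam}(Q)$, which is absorbed into a numerical factor (this is the origin of the extra constant $c'$ in $N(\varepsilon)$). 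Thus Theorem \ref{theorem_1} applies verbatim to $f_{\gamma}$ with the stated $\tau_{k},\alpha_{k}$.

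Applying Theorem \ref{theorem_1} to $f_{\gamma}$, and writing $x_{\gamma}^{*}$ for its minimizer over $Q$, gives
\begin{equation*}
\mathbb{E}\left[f_{\gamma}(\overline{x}_{N})-f_{\gamma}(x_{\gamma}^{*})\right]\leq\frac{1}{\gamma}\left(n^{2-\frac{1}{\beta}}\frac{A_{1}}{N^{\frac{\beta-1}{\beta}}}+A_{2}\frac{n(1+\ln N)}{N}\right).
\end{equation*}
Next I would pass from $f_{\gamma}$ back to $f$: since the quadratic term is nonnegative, $f(\overline{x}_{N})\leq f_{\gamma}(\overline{x}_{N})$; and since $x_{\gamma}^{*}$ minimizes $f_{\gamma}$ over $Q$ while $x^{*}\in Q$, we get $f_{\gamma}(x_{\gamma}^{*})\leq f_{\gamma}(x^{*})=f(x^{*})+\frac{\gamma}{2}R^{2}$ with $R=\|x_{0}-x^{*}\|$. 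Combining these,
\begin{equation*}
\mathbb{E}\left[f(\overline{x}_{N})-f(x^{*})\right]\leq\frac{1}{\gamma}\left(n^{2-\frac{1}{\beta}}\frac{A_{1}}{N^{\frac{\beta-1}{\beta}}}+A_{2}\frac{n(1+\ln N)}{N}\right)+\frac{\gamma}{2}R^{2}.
\end{equation*}

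The remaining step is to balance the regularization error against the optimization error. Choosing $\gamma=\varepsilon/R^{2}$ (the largest value allowed) makes $\frac{\gamma}{2}R^{2}=\varepsilon/2$, so it remains to force the Theorem \ref{theorem_1} term below $\varepsilon/2$; demanding each of its two summands (now carrying the factor $1/\gamma=R^{2}/\varepsilon$) to be below a fixed fraction of $\varepsilon$ and solving for $N$ produces the two candidate thresholds. The summand with $A_{1}$ requires $N^{\frac{\beta-1}{\beta}}\gtrsim R^{2}A_{1}n^{2-\frac{1}{\beta}}/\varepsilon^{2}$, i.e. $N\gtrsim (R^{2}A_{1})^{\frac{\beta}{\beta-1}}n^{(2-\frac{1}{\beta})\frac{\beta}{\beta-1}}/\varepsilon^{\frac{2\beta}{\beta-1}}$; since $(2-\tfrac{1}{\beta})\tfrac{\beta}{\beta-1}=2+\tfrac{1}{\beta-1}$ and $\tfrac{2\beta}{\beta-1}=2+\tfrac{2}{\beta-1}$, this is exactly the first entry of $N(\varepsilon)$, up to the explicit numerical constant $(R\sqrt{2A_{1}})^{\frac{2\beta}{\beta-1}}$ obtained by tracking the splitting carefully.

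The one genuine obstacle is the logarithmic factor $1+\ln N$ in the second summand, which blocks a clean inversion. Here I would invoke, for an arbitrarily small $\rho>0$, the elementary bound $1+\ln N\leq c'(\rho)N^{\rho}$ valid for all $N\geq 1$ (e.g.\ from $\ln x\leq\rho^{-1}(x^{\rho}-1)$); substituting it, the requirement on the second summand becomes $N^{1-\rho}\gtrsim R^{2}c'A_{2}\,n/\varepsilon^{2}$, i.e.\ $N\gtrsim(R^{2}c'A_{2})^{\frac{1}{1-\rho}}n^{\frac{1}{1-\rho}}/\varepsilon^{\frac{2}{1-\rho}}$, and relabelling $\rho$ (legitimate since $\rho$ is arbitrary and $\tfrac{1}{1-\rho}$ can be taken to be $1+\rho$) puts this in the form of the second entry of $N(\varepsilon)$. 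Taking $N(\varepsilon)$ to be the maximum of the two thresholds then guarantees $\mathbb{E}[f(\overline{x}_{N})-f(x^{*})]\leq\varepsilon$. Apart from this, the only care needed is the bookkeeping of numerical constants through the splitting of $\varepsilon$ and the harmless replacement of $G$ by the Lipschitz constant of $f_{\gamma}$ in $A_{2}$.
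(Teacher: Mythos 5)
Your proposal is correct and follows essentially the same route as the paper: regularize, apply Theorem \ref{theorem_1} to $f_{\gamma}$, use $f_{\gamma}(x_{\gamma}^{*})\leq f_{\gamma}(x^{*})$ to pass back to $f$ at the cost of $\frac{\gamma}{2}R^{2}=\frac{\varepsilon}{2}$, and invert the two summands for $N$, taming the logarithm by $1+\ln N\leq c'N^{\rho/(1+\rho)}$. Your added check that $f_{\gamma}$ remains in $\mathcal{F}_{\gamma,\beta}(L)$ (since $l\geq 2$ the quadratic is reproduced exactly by the Taylor polynomial) is a worthwhile detail the paper leaves implicit; note only that $c'$ arises from the logarithm bound, not from the change in the Lipschitz constant.
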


\begin{proof}
{\bf Step 1.} Let $x^*$ and $x_{\gamma}^*$ denote $\arg\min\limits_{x\in Q} f(x)$ and $\arg\min\limits_{x\in Q} f_{\gamma}(x)$ respectively. 
Setting $\gamma = \frac{\varepsilon}{R^2}$ and using the inequality $f_{\gamma}(x_{\gamma}^*) \leq f_{\gamma}(x^*)$ we obtain
\begin{equation}\label{connection}
\begin{split}
    f(\overline{x}_N) - f(x^*) 
    &= f_{\gamma}(\overline{x}_N) - f_{\gamma}(x^*) 
    -\dfrac{\gamma}{2} \|\overline{x}_N-x_0\|^2 
    +\dfrac{\gamma}{2} \|x^* - x_0\|^2\\
    &\leq f_{\gamma}(\overline{x}_N) - f_{\gamma}(x^*) 
    +\dfrac{\gamma}{2} \|x^* - x_0\|^2 \\
    &\leq f_{\gamma}(\overline{x}_N) - f_{\gamma}(x_{\gamma}^*) 
    +\dfrac{\varepsilon}{2}.\\
\end{split}
\end{equation}

{\bf Step 2.} 
Now we apply Theorem \ref{theorem_1} for $f_{\gamma}(x)$ and bound RHS by $\frac{\varepsilon}{2}$:
\begin{equation}\label{ref-theorem-1}
    \mathbb{E}\left[f_{\gamma}(\overline{x}_N)-f_{\gamma}(x^*)\right]\leq \dfrac{1}{\gamma} \left(n^{2-\frac{1}{\beta}}\dfrac{A_1}{N^{\frac{\beta-1}{\beta}}}+A_2\dfrac{n(1+\ln{N})}{N} \right) \leq \dfrac{\varepsilon}{2}.
\end{equation}

The inequality \eqref{ref-theorem-1} is done if ($\gamma=\frac{\varepsilon}{R^2}$)
\begin{equation}\label{th2-step-2}
    \max\left\{n^{2-\frac{1}{\beta}}\dfrac{A_1}{N^{\frac{\beta-1}{\beta}}}, A_2\dfrac{n(1+\ln{N})}{N} \right\} 
    \leq \dfrac{\gamma \varepsilon}{2}
    = \dfrac{\varepsilon^2}{2R^2} .
\end{equation}

It is true that  $1 + \ln N \leq c' N^{\frac{\rho}{\rho + 1}}$ for some $c' > 0$. So the inequality \eqref{th2-step-2} holds if
\begin{equation}
    N \geq \max\left\{ \left(R\sqrt{2A_1}\right)^{\frac{2\beta}{\beta-1}}\dfrac{n^{2+\frac{1}{\beta-1}}}{\varepsilon^{2+\frac{2}{\beta-1}}},\left(R\sqrt{2c'A_2}\right)^{2(1+\rho)}\dfrac{n^{1+\rho}}{\varepsilon^{2(1+\rho)}}\right\}.
\end{equation}

The inequalities \eqref{connection} and \eqref{ref-theorem-1}  yield $\mathbb{E}\left[f(\overline{x}_N)-f(x^*)\right]\leq \varepsilon$.
\EndProof
\end{proof}

\section{Numerical Experiments}\label{section:experiments}
In our experiment \cite{novitskii} we compare the Algorithm \ref{algo1} (with $\beta = 3$ and $\beta = 5$) proposed in this paper with Gasnikov's one-point method and with Akhavan's method for the special case $\beta = 2$. 

We consider the problem of the minimization of the quadratic function
\begin{equation*}
    f(x) = \dfrac{1}{4} x_1^2 + x_2^2 + 4x_3^2
\end{equation*}
on the Euclidean ball $Q = \{ x \in \R^3: \, \|x\|\leq 1\}$.

The starting point is $x_0$ with $\|x_0\| = \nicefrac{1}{2}$.  The dependency of $f(\overline{x}_N)-f(x^*)$ (optimization error) on $N$ (iteration number) is presented on the Figure \ref{fig:experiment}. The optimization error has its mean and 0.95-confidence interval.  As the Lipshitz constants for the quadratic oracle are equal to zero, for the Algorithm \ref{algo1} we choose $L = 0.01$. 

\begin{figure}[h]
\centering
\includegraphics[width =  0.78\linewidth]{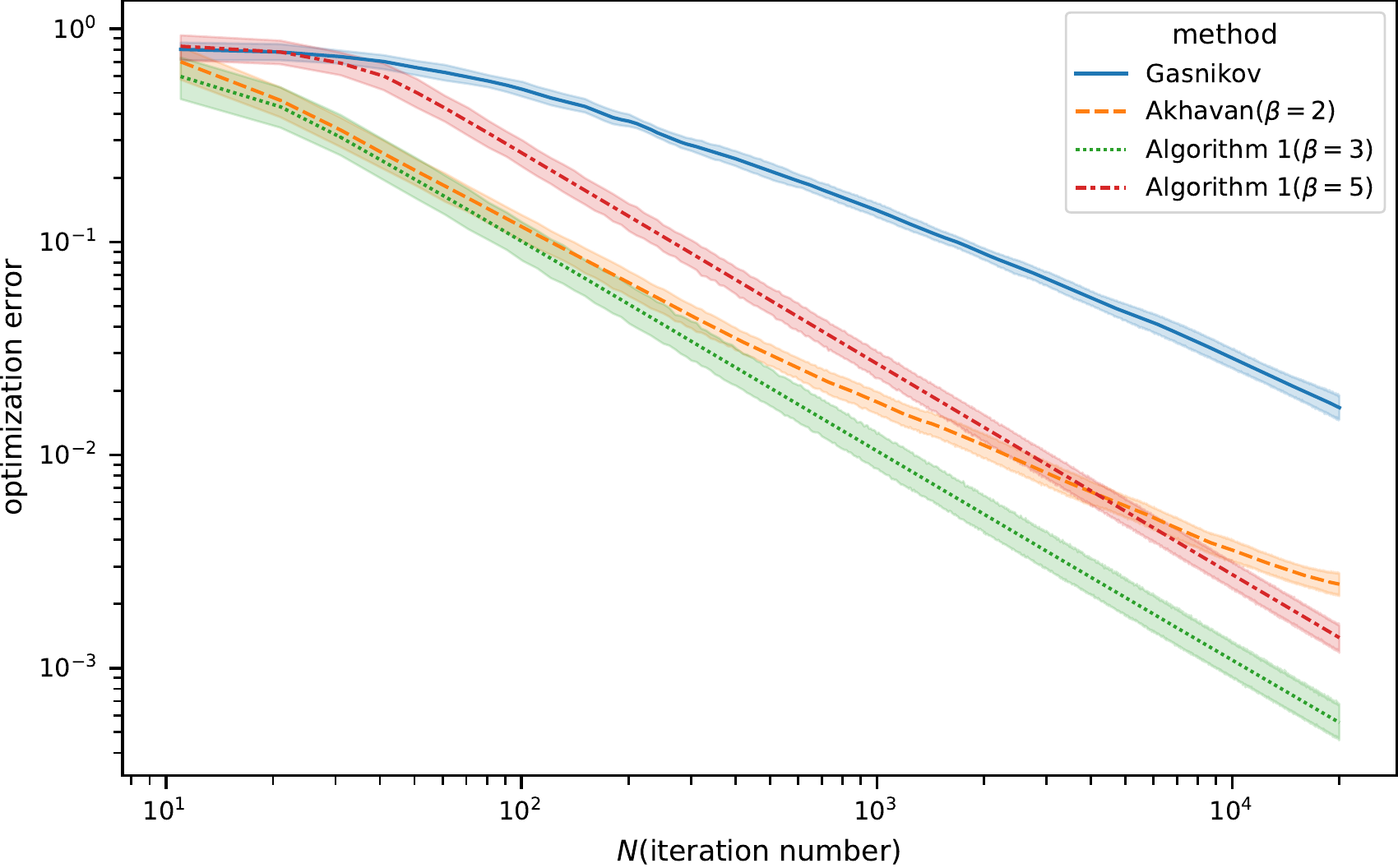}
\caption{Dependency of optimization error of Algorithm \ref{algo1} on iteration}
\label{fig:experiment}
\end{figure}

We see on the Figure \ref{fig:experiment} that the usage of higher-order smoothness by Algorithm \ref{algo1} helps to overcome the methods which do not use this. 

\section{Discussion and related work}\label{section:discussion}
The results of this paper can be generalized  for the saddle-point problems. Recently GANs and Reinforcement Learning caused a big interest for saddle-point problems, see \cite{sadiev2020zeroth}. 

Another possible genelization of this paper is obtaining the large probability bounds for optimization error. We cannot obtain upper bounds in terms of 
of large deviation probability (not in terms of expectation) under the Assumption \ref{noise-ass}. 
The exploiting of higher order smoothness with the help of kernels under rather general noise assumptions (non-zero mean) causes big variation $\|\widetilde{g_k} - \nabla f(x_k)\|$ and this can causes the problems with large deviation probability rates. 

It remains an open question whether large deviation probability can be obtained under non-zero mean noise. And also it remains an open question whether better dependence of optimization error on the dimenstion $n$ and strong convexity parameter $\gamma$ can be obtained.

{\bf Acknowledgements.} We would like to thank Alexandre B. Tsybakov for helpful remarks about Tables 1 and 2.

\bibliographystyle{spmpsci}
\bibliography{literature}

\end{document}